\title{A Polynomial Ramsey Statement for Bounded VC-dimension}
\author{Tom\'{a}\v{s} Hons}
\affil{\small Computer Science Institute of Charles University, Prague, Czech Republic}
\date{}
\theoremstyle{plain}
\newtheorem{theorem}{Theorem}
\newtheorem{lemma}{Lemma}
\newtheorem{corollary}{Corollary}
\newtheorem{observation}{Observation}
\theoremstyle{definition}
\newtheorem{definition}{Definition}
\theoremstyle{remark}
\newcommand{\N}{\mathbb{N}}
\newcommand{\R}{\mathbb{R}}
\DeclareMathOperator\VCdim{VC-dim}
\newcommand{\eps}{\varepsilon}
\begin{document}

\maketitle

\begin{abstract}
A theorem by Ding, Oporowski, Oxley, and Vertigan states that every sufficiently large bipartite graph without twins contains a matching, co-matching, or half-graph of any given size as an induced subgraph.
We prove that this Ramsey statement has polynomial dependency assuming bounded VC-dimension of the initial graph, using the recent verification of the Erd\H{o}s-Hajnal property for graphs of bounded VC-dimension.
Since the theorem of Ding et al. plays a role in (finite) model theory, which studies even more restricted structures, we also comment on further refinements of the theorem within this context.
\end{abstract}

\section{Introduction}\label{sec:introduction}

Ramsey theory is an area of mathematics asserting that each sufficiently large object contains a still large well-structured subobject.
The foundational theorem, proved by Frank Ramsey in 1930, states that every sufficiently large graph contains a still large homogeneous (induced) subgraph, i.e. a clique or an independent set \cite{Ramsey1930}.
It is well known by a theorem of Erd\H{o}s and Szekeres \cite{Erdos1935} that there is always a logarithmically large homogeneous subgraph.
However, Erd\H{o}s showed by a probabilistic construction that, in general, we cannot hope for larger than logarithmic \cite{Erdos1947}.
Finding the exact base of the logarithm is one of the most difficult and important problems in Ramsey theory, which have recently witnessed a major breakthrough by the work of Campos, Griffiths, Morris, and Sahasrabudhe \cite{Campos2023}, which was subsequently optimized by Gupta, Ndiaye, Norin, and Wei \cite{Gupta2024}.

To obtain a bigger than logarithmically large homogeneous subgraph, it is necessary to put additional assumptions on the given graphs.
A class of graphs $\mathcal{C}$ is said to have the Erd\H{o}s-Hajnal property if there is a constant $e>0$ such that each graph $G \in \mathcal{C}$ on $n$ vertices has a homogeneous subgraph of size $n^e$.
The celebrated Erd\H{o}s-Hajnal conjecture states that for each graph $H$ the class of $H$-free graphs admits the Erd\H{o}s-Hajnal property \cite{Erdos1977}.
The conjecture is widely open in full, and was only recently resolved for all graphs on $5$ vertices \cite{Nguyen2023}\cite{Chudnovsky2013}.

A more restrictive property for a graph class than being $H$-free is having bounded VC-dimension.
The Vapnik–Chervonenkis dimension, VC-dimension for short, is a measure of complexity of combinatorial structures, originally invented in the context of learning theory \cite{Vapnik1971}, which later proved to be fruitful in combinatorics as well.
Structures with bounded VC-dimension often possess better Ramsey properties than general graphs \cite{Fox2021}\cite{Janzer2024}\cite{Balogh2024}.
In particular, Nguyen, Scott, and Seymour recently proved that graphs of bounded VC-dimensions admit the Erd\H{o}s-Hajnal property \cite{NSS_2023}.

In this paper, we consider the following Ramsey statement by Ding, Oporowski, Oxley, and Vertigan \cite{Ding_1996}, which was independently proved by Alekseev \cite{Alekseev1997} (in Russian), and by Gravier, Maffray, Renault, and Trotignon \cite{Gravier2004} building upon a result of Bauslaugh \cite{Bauslaugh2001}.

\begin{theorem}[\cite{Ding_1996}\cite{Alekseev1997}\cite{Gravier2004}]\label{thm:doov}
    There is a monotone unbounded function $f: \N \to \N$ such that the following holds.
    Let $G = (U,V,E)$ be a bipartite graph such that the part $V$ has $n$ vertices and no twins.
    Then it contains an induced subgraph $F=(U',V',E')$ with $|U'| = |V'| \geq f(n)$ that is isomorphic to either:
    \begin{enumerate}[(i)]
        \item a matching,
        \item a co-matching,
        \item a half-graph.
    \end{enumerate}
\end{theorem}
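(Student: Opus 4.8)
The plan is to reduce the statement to a clean Ramsey-type claim about $0/1$-matrices with pairwise distinct rows, and then to settle that claim by combining a ``decision tree'' extraction with a single application of the graph Ramsey theorem. Write $B$ for the twin-free part of size $n$ and $A$ for the other part, and identify each $b\in B$ with its neighbourhood $S_b\subseteq A$; twin-freeness says the family $\mathcal S=\{S_b:b\in B\}$ consists of $n$ pairwise distinct sets, and the roles of matching/co-matching/half-graph are symmetric enough that it does not matter which side plays the role of $U'$. So it suffices to find a monotone unbounded $g$ such that every family of $n$ distinct subsets of a ground set $A$ contains sets $T_1,\dots,T_k$ and distinct elements $a_1,\dots,a_k\in A$ with $k\ge g(n)$ realising, after reindexing, one of the patterns ``$a_j\in T_i\iff i=j$'' (matching), ``$a_j\in T_i\iff i\neq j$'' (co-matching), or ``$a_j\in T_i\iff i\le j$'' (half-graph); such a configuration yields the required induced $F$ with $U'=\{a_1,\dots,a_k\}$ and $V'$ the corresponding $k$ vertices of $B$.

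\textbf{Extracting a triangular skeleton.} I would build a binary tree that separates $\mathcal S$: at any node still holding $\ge 2$ of the sets, choose an element of $A$ lying in some but not all of them and split accordingly, recursing until every leaf is a singleton (such an element always exists since two distinct sets differ somewhere, and it is never re-used on a branch). The tree has $n$ leaves, hence a root-to-leaf branch of length $h\ge\log_2 n$; let $S^*$ be the set at that leaf and let $a_1,\dots,a_h$ be the (distinct) elements queried along it. For each $i\le h$ the subtree hanging off the sibling of the $i$-th branch node is nonempty, so pick any $T_i$ in it. By construction $T_i$ agrees with $S^*$ on $a_1,\dots,a_{i-1}$, disagrees with it on $a_i$, and is unconstrained on $a_{i+1},\dots,a_h$; as these subtrees are disjoint, $T_1,\dots,T_h$ are pairwise distinct. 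Equivalently, the $h\times h$ matrix with $(i,j)$ entry $\mathbf{1}[a_j\in T_i]\oplus\mathbf{1}[a_j\in S^*]$ is $1$ on the diagonal, $0$ strictly below it, and arbitrary strictly above it.

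\textbf{Finishing via Ramsey.} Now two-colour each pair $i<j$ by the as-yet-arbitrary entry $\mathbf{1}[a_j\in T_i]\oplus\mathbf{1}[a_j\in S^*]$. Ramsey's theorem yields a monochromatic index set $I$ with $|I|\ge\tfrac12\log_2 h-1$, and shrinking $I$ by at most half (to whichever of $\{j\in I:a_j\in S^*\}$, $\{j\in I:a_j\notin S^*\}$ is larger) we may assume in addition that $S^*$ is constant on $\{a_j:j\in I\}$. If the monochromatic colour is $0$, then for $i,j\in I$ combining the skeleton with the colour gives ``$a_j\in T_i\iff a_j\in S^*$'' for $j\neq i$ and ``$a_j\in T_i\iff a_j\notin S^*$'' for $j=i$, i.e. a matching or a co-matching according to the constant value of $S^*$. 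If the colour is $1$, the same bookkeeping gives ``$a_j\in T_i\iff a_j\notin S^*$'' for $j\ge i$ and ``$a_j\in T_i\iff a_j\in S^*$'' for $j<i$, i.e. a half-graph (after possibly reversing one of the two linear orders). In every case the pattern has size $\ge|I|/2$, so one may take $f(n)$ of order roughly $\tfrac14\log_2\log_2 n$, extended to be monotone and at least $1$.

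\textbf{Main obstacle.} The one genuine idea is the reduction to the triangular skeleton. A naive attempt to $2$-colour pairs of members of $\mathcal S$ by ``where they differ'' fails because the distinguishing coordinate depends on the pair, so there is no single coordinate set to run Ramsey against; the decision-tree extraction repairs this by fixing a coherent coordinate system $a_1,\dots,a_h$ together with witnesses $T_i$ whose adjacency to those coordinates is already half-determined, after which a single graph-Ramsey step and a two-line case analysis suffice. A secondary point, already handled above, is to keep the two sides balanced so that $|U'|=|V'|$, which is why $S^*$ itself is discarded and only the $T_i$ and the $a_j$ are retained.
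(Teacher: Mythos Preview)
Your argument is correct and recovers the classical $\Theta(\log\log n)$ bound. Note, however, that the paper does not itself prove Theorem~\ref{thm:doov}: it is stated as a cited result of Ding et al.\ and Gravier et al., while the paper's actual contribution is the polynomial version under bounded VC-dimension (Theorem~\ref{thm:poly_ding_matrices}). That said, Section~\ref{sec:proof} summarises Ding et al.'s original proof as a two-step procedure---first extract a square $(\alpha,\beta,*)$-submatrix with $\alpha\neq\beta$, then apply Ramsey's theorem to homogenise the remaining triangle, each step costing a logarithm---and your proof is a faithful implementation of exactly this outline. The decision-tree construction is a clean way to perform the first step, and the device of working in XOR-with-$S^*$ coordinates, then pigeonholing on the value of $S^*$ only at the end, neatly postpones the $\alpha/\beta$ case split.

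Where the paper's own machinery differs is in the first step: instead of the decision tree, it extracts a \emph{switch submatrix} (Lemma~\ref{lem:switch_matrix}), keeping two candidate columns per row so that the diagonal value $\beta$ can be chosen \emph{after} Ramsey has fixed the above-diagonal value $\alpha$. This gains nothing for the unrestricted theorem, but it is the key to the polynomial bound: your decision-tree depth is only $\log_2 n$ regardless of any tameness assumption, so no subsequent Ramsey step could ever recover a polynomial from it, whereas the switch-submatrix step loses only an $n\mapsto n^{1/d}$ under the Sauer--Shelah bound on the growth function and then feeds into the Nguyen--Scott--Seymour theorem in place of ordinary Ramsey.
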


Here the term \emph{twins} refers to a pair of vertices with the same set of neighbors.
All the graphs from the conclusion are bipartite with parts $u_1, \dots, u_n$ and $v_1, \dots, v_n$.
The matching is the graph with $u_i \sim v_j$ iff $i = j$.
The co-matching is the bipartite complement of a matching, i.e. $u_i \sim v_j$ iff $i \not= j$.
The half-graph is the graph with $u_i \sim v_j$ iff $i \leq j$.
Note that the binary complement, i.e. $u_i \sim v_j$ iff $i > j$, of the half-graph of order $n$ contains a half-graph of order $n-1$ as an induced subgraph.

Remarkably, all \cite{Ding_1996}\cite{Alekseev1997}\cite{Gravier2004} proved Theorem~\ref{thm:doov} with log-log dependence of the size of $F$ on the size of $G$, and Gravier et al. showed that we cannot hope for better than logarithmic dependency~\cite{Gravier2004}.
In view of the recent progress on the Erd\H{o}s-Hajnal property, it is a natural question,\footnote{The author learned about the question from Jan Dreier and Szymon Toruńczyk in an open problem session within Algomanet Workshop in September 2024, Warsaw.} whether Theorem~\ref{thm:doov} holds with a polynomial bound under the additional assumption that the graph $G$ has bounded VC-dimension.

Here we confirm that this is indeed true.
Our main result is the following theorem.

\begin{theorem}\label{thm:poly_ding_graphs}
    For every $d \in \N$, there is $c > 0$ such that the following holds.
    Let $G = (U,V,E)$ be a bipartite graph of VC-dimension at most $d$ such that the part $V$ has $n$ vertices and no twins.
    Then it contains an induced subgraph $F=(U',V',E')$ with $|U'| = |V'| \in \Omega(n^c)$ that is isomorphic to either:
    \begin{enumerate}[(i)]
        \item a matching,
        \item a co-matching,
        \item a half-graph.
    \end{enumerate}
\end{theorem}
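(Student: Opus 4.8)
The plan is to combine the qualitative statement of Theorem~\ref{thm:doov} with the Erdős–Hajnal property for bounded VC-dimension classes \cite{NSS_2023}, using the latter as a "polynomial engine" that replaces the weak log-log bound of Ding et al. First I would recall that the three target graphs — matching, co-matching, half-graph — are exactly the bipartite patterns that are \emph{not} ``homogeneous-heavy'': in a large induced matching or co-matching one still finds large cliques/independent sets inside appropriate auxiliary graphs, and similarly the half-graph is the canonical bipartite pattern detected by order. The key idea is to build from $G$ an auxiliary graph (or a bounded family of auxiliary graphs) $G'$ on roughly the same vertex set, whose VC-dimension is bounded in terms of $d$, such that a large homogeneous set in $G'$ translates into a large copy of one of the three patterns in $G$. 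Then applying the Nguyen–Scott–Seymour theorem to $G'$ yields a homogeneous set of size $|V(G')|^{e}$ for some $e = e(d) > 0$, and pulling this back gives the desired $\Omega_d(n^c)$ bound with $c$ depending only on $d$.

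More concretely, the steps I would carry out are: (1) Use the no-twins hypothesis on $V$ to pass, via Theorem~\ref{thm:doov} or a direct VC-argument, to a sub-configuration with enough ``distinguishing'' structure — e.g. order the vertices of $V$ and record for each the neighborhood trace. (2) Define on (a subset of) $U$ the graph $G'$ where $u \sim_{G'} u'$ iff the symmetric difference of their neighborhoods in $V$ meets a fixed witnessing set, or some comparably simple Boolean combination of the original adjacency; such combinations of bounded-VC set systems again have bounded VC-dimension (by the standard Dudley / Sauer–Shelah composition bounds), so $\VCdim(G') \le d' = d'(d)$. (3) Invoke \cite{NSS_2023}: $G'$ has a clique or independent set $W$ of size $|U|^{e(d')}$. (4) Case-analyze: a clique in $G'$ should force either a half-graph (if the distinguishing relation is ``ordered'') or a co-matching in $G$; an independent set in $G'$ should force a matching or a half-graph. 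In each case, after possibly one more application of Ramsey or a second bounded-VC homogeneity extraction to clean up the pattern, extract a balanced induced copy of one of (i)--(iii) of size polynomial in $|W|$, hence polynomial in $n$.

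The main obstacle I expect is step (2)--(4): designing the auxiliary graph(s) so that (a) their VC-dimension stays bounded, and simultaneously (b) \emph{every} large homogeneous set — not just cliques, or not just independent sets — cleanly yields one of the three prescribed patterns rather than some degenerate or mixed configuration. The three patterns are governed by genuinely different combinatorial phenomena (two ``matching-like'' and one ``order-like''), so a single auxiliary graph is unlikely to suffice; more plausibly one needs a constant-size family of auxiliary graphs (e.g. one encoding an order, one encoding near-equality of neighborhoods), applies \cite{NSS_2023} to each in turn on nested subsets, and argues that at least one of them must be ``dense'' or ``sparse'' enough to trigger the corresponding pattern. Controlling how the polynomial exponent degrades through this constant number of iterations — each costing a power — is routine once the structural dichotomy is set up correctly, but getting the dichotomy itself exactly right, and verifying the VC-dimension bound for the chosen Boolean combinations, is where the real work lies.
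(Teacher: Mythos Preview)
Your high-level strategy matches the paper's: build auxiliary graphs of bounded VC-dimension from $G$, apply the Nguyen--Scott--Seymour theorem to obtain polynomial-size homogeneous sets, and translate these back into one of the three patterns. The paper's auxiliary graph is the \emph{upper graph} of a square matrix $A$ (the graph on $[n]$ with $i \sim j$ iff $i<j$ and $A_{i,j}=1$); Lemma~\ref{lem:vc_dim_of_upper_graph} shows that if $\VCdim(A)\le d$ then its upper graph has VC-dimension at most $4d+1$, so NSS applies and yields a principal submatrix constant above the diagonal. A second application to the transpose homogenizes below the diagonal. So your steps (2)--(3) and the ``constant number of iterations'' remark are essentially right, and the auxiliary graph is even simpler than the symmetric-difference construction you propose.

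The genuine gap is exactly the obstacle you flag but do not resolve: after one NSS application you have a submatrix with constant value $\alpha$ above the diagonal, but no control over the diagonal; if the diagonal also equals $\alpha$ you are left with a degenerate pattern that is none of (i)--(iii), and nothing in your plan prevents this. The paper handles it by a preparatory step absent from your outline: before invoking NSS, it greedily extracts (using only Sauer--Shelah and the no-twins hypothesis) a \emph{switch submatrix} of size $\Omega_d(n^{1/d})$ --- an $n\times 2n$ matrix in which row $i$ has $A_{i,2i-1}=0$, $A_{i,2i}=1$, and for $i<j$ the paired columns $2j-1,2j$ agree in row $i$ (Lemma~\ref{lem:switch_matrix}). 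The upper-triangular pattern is therefore the same whichever of the two paired columns one keeps, but the diagonal value can be chosen \emph{after} NSS determines $\alpha$, guaranteeing $\alpha\neq\beta$. Your step (1) gestures at ``distinguishing structure'' from no-twins, but invoking Theorem~\ref{thm:doov} there would reintroduce the log-log loss, and the specific pairs-of-columns construction that decouples the diagonal from the upper triangle is the concrete idea your plan is missing.
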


Moreover, since Theorem~\ref{thm:doov} plays a role in model theory, where it is often considered to be a folklore result \cite{Bonnet2024}\cite{Gajarsky2023}, we comment on further improvements of Theorem~\ref{thm:poly_ding_graphs} within model-theoretic context in Section~\ref{sec:remarks_on_tame_classes}.
In fact, the applications in model theory were the original motivation of this paper.
\section{Preliminaries}\label{sec:prelims}

All our graphs are simple and finite.
All our matrices are binary.
Following the paper of Ding et al. \cite{Ding_1996}, we mostly work in the language of biadjacency matrices.
The biadjacency matrix of a bipartite graph $G = (U,V,E)$ is the matrix $A \in \{0,1\}^{|U| \times |V|}$ with $A_{i,j} = 1$ iff $\{u_i, v_j\} \in E$.
Natural operations for biadjacency matrices include permutation and deletion of rows/columns.
Note that a matrix $B$ that is obtained from $A$ by these operations is the biadjacency matrix of an induced subgraph of $G$.
We call such a matrix $B$ a \emph{submatrix} of $A$.

Given values $\alpha, \beta, \gamma \in \{0,1\}$, we say that a matrix $A \in \{0,1\}^{m \times n}$ is a \emph{$(\alpha, \beta, \gamma)$-matrix} if it is of the form:
\begin{align*}
    A_{i,j} =
    \begin{cases}
        \alpha &\text{ if } i < j, \\
        \beta  &\text{ if } i = j, \\
        \gamma &\text{ if } i > j.
    \end{cases}
\end{align*}

If not all the parameters $\alpha, \beta, \gamma$ are equal, we say that the $(\alpha, \beta, \gamma)$-matrix is \emph{inhomogeneous}.
Observe that the inhomogeneous square $(\alpha, \beta, \gamma)$-matrices correspond to the biadjacency matrices of the graphs from the conclusion of Theorem~\ref{thm:poly_ding_graphs} (and the binary complement of a half-graph).
Consequently, our result can be stated in the matrix language as follows.

\begin{theorem}\label{thm:poly_ding_matrices}
    For every $d \in \N$, there is $c > 0$ such that the following holds.
    Let $A$ be a binary matrix of VC-dimension at most $d$ with at least $n$ columns, no two of which are identical.
    Then it contains an inhomogeneous square $(\alpha, \beta, \gamma)$-submatrix of size $n^c \times n^c$.
\end{theorem}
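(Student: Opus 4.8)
The plan is to restate the goal in a slightly weaker but cleaner form, then reduce it in two steps to a single application of the Erd\H{o}s--Hajnal property for bounded VC-dimension \cite{NSS_2023}, with a structural extraction in the spirit of \cite{Ding_1996}\cite{Gravier2004} in between. The reformulation: running through the eight choices of $(\alpha,\beta,\gamma)$, a square $(\alpha,\beta,\gamma)$-matrix is either constant (exactly when $\alpha=\beta=\gamma$) or, after possibly reversing the row and column orders and deleting one row and one column, it is the biadjacency matrix of a matching, a co-matching, or a half-graph of almost the same size. (For instance $(0,0,1)$ is the bipartite complement of a half-graph and contains a half-graph of order one less, as already noted in the excerpt.) Hence it suffices to find a \emph{non-constant} square $(\alpha,\beta,\gamma)$-submatrix of $A$ of size $t\times t$ with $t=\Omega_d(n^{c})$.

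\textbf{Step 1 (structural extraction).} First I would extract a large ``staircase'': indices giving, after reordering rows and columns and possibly complementing $A$ entrywise (which preserves the VC-dimension), a submatrix $B$ with $B_{i,j}=\gamma$ for $i>j$ and $B_{i,i}=1-\gamma$, the block $i<j$ being arbitrary. For $\gamma=0$ this amounts to finding columns $c_1,\dots,c_t$ of $A$ with $c_i\not\subseteq c_1\cup\dots\cup c_{i-1}$ for every $i$ (and then $r_i$ is any element of $c_i\setminus(c_1\cup\dots\cup c_{i-1})$); for $\gamma=1$ it is the dual statement about intersections. This is exactly the triangular configuration that twin-freeness forces and that is extracted, with a log--log rate, in \cite{Ding_1996}\cite{Gravier2004}. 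The task is to run that argument at a polynomial rate in the bounded VC-dimension regime: duplicate rows may be discarded since Sauer--Shelah forces an $n$-column matrix of VC-dimension $\le d$ to have $\Omega_d(n^{1/d})$ distinct rows, the auxiliary families appearing in the recursion stay of polynomially bounded size for the same reason, and the Ramsey-type extractions internal to the argument can be replaced by \cite{NSS_2023}; since the recursion has depth $O_d(1)$, the total loss is polynomial.

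\textbf{Step 2 (filling the free triangle).} Given a staircase $B$ as above, of size $t\times t$, write $B_{i,j}=A_{\rho(i),\sigma(j)}$ for the underlying injections $\rho$ (rows) and $\sigma$ (columns), and consider the graph $\mathcal{G}$ on $[t]$ with $i\sim j$, for $i<j$, iff $B_{i,j}=1$. Then $N_{\mathcal{G}}(j)=\bigl(\rho^{-1}(N_G(\sigma(j)))\cap[1,j)\bigr)\cup\bigl(\sigma^{-1}(N_G(\rho(j)))\cap(j,t]\bigr)$. The families $\{\rho^{-1}(N_G(\sigma(j)))\}_j$ and $\{\sigma^{-1}(N_G(\rho(j)))\}_j$ are traces of the neighbourhood set systems of $G$, hence of VC-dimension $O_d(1)$; the initial and final intervals of $[t]$ form a family of VC-dimension at most $2$; and intersections and unions of boundedly many families of bounded VC-dimension again have bounded VC-dimension, so $\VCdim(\mathcal{G})=O_d(1)$. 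Applying \cite{NSS_2023} to $\mathcal{G}$ yields a homogeneous set $S$ with $|S|=\Omega_d(t^{e})$ for some $e=e(d)>0$. Listing $S$ in increasing order, $B[S,S]$ is an $(\alpha,1-\gamma,\gamma)$-matrix with $\alpha\in\{0,1\}$, which is non-constant because $1-\gamma\ne\gamma$; by the reformulation it therefore contains a matching, co-matching, or half-graph of order $\ge|S|-1$. Composing the two polynomial rates and translating back into graph language proves the theorem.

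The clean part here is Step 2: once one observes that the derived graph $\mathcal{G}$ inherits bounded VC-dimension, the polynomial gain is a single invocation of \cite{NSS_2023}, and this is precisely where the improvement over \cite{Ding_1996}\cite{Gravier2004} comes from. The main obstacle is Step 1 --- verifying that the Ding--Oporowski--Oxley--Vertigan structural extraction really can be carried out at a polynomial rate under the VC-dimension hypothesis, i.e. that the recursion depth is bounded in terms of $d$ and each stage loses only a polynomial factor.
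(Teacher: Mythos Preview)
Your Step~2 is essentially the paper's argument: the auxiliary ``upper graph'' built from the free triangle of a square submatrix has VC-dimension $O_d(1)$ (the paper proves this by a direct Sauer--Shelah count rather than via closure properties, but either works), and one application of \cite{NSS_2023} then homogenizes that triangle. So the analysis you call ``clean'' matches the paper.

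The gap is exactly where you locate it: Step~1. You ask for a polynomial-size staircase $B$ with $B_{i,j}=\gamma$ below the diagonal and $B_{i,i}=1-\gamma$, and you propose to obtain it by running the Ding--Oporowski--Oxley--Vertigan recursion and arguing that under bounded VC-dimension the depth is $O_d(1)$ and each stage is polynomial. This is not substantiated, and I do not see how to make it work: the greedy you sketch for $\gamma=0$ (pick $c_i\not\subseteq c_1\cup\dots\cup c_{i-1}$) gives no control over the union $c_1\cup\dots\cup c_i$, so Sauer--Shelah does not bound the number of columns trapped inside it, and the internal steps of \cite{Ding_1996} are not Ramsey-type in a way that \cite{NSS_2023} can replace.

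The paper sidesteps this entirely with a different structural object, a \emph{switch matrix}: an $n\times 2n$ matrix in which row $i$ has $0$ in column $2i-1$ and $1$ in column $2i$, and for $i<j$ the two entries $A_{i,2j-1},A_{i,2j}$ agree. This is built greedily: having chosen $i$ rows, Sauer--Shelah bounds the number of column-patterns on those rows by $O_d(i^d)$, so some pattern is shared by two not-yet-used columns, which must differ on a fresh row; add that row and those two columns. This runs for $\Omega_d(n^{1/d})$ steps using only Sauer--Shelah, with no Ramsey input. The point of keeping both columns is that \emph{after} one application of \cite{NSS_2023} determines the common value $\alpha$ above the diagonal, you choose (for each surviving index $i$) the odd or the even column so as to force $\beta=1-\alpha$ on the diagonal; a second application of \cite{NSS_2023} then fixes $\gamma$. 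So the paper trades your single Erd\H{o}s--Hajnal step for two, in exchange for a structural extraction that is genuinely polynomial and elementary. That switch-matrix idea is the missing ingredient in your Step~1.
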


In particular, we ensure that $\alpha \not= \beta$.
The VC-dimension of a matrix is defined in the next section.
%It is imminent that Theorem~\ref{thm:poly_ding_graphs} can be proved by applying Theorem~\ref{thm:poly_ding_matrices} to the biadjacency matrix of $G$.

\subsection{VC-dimension}

The Vapnik–Chervonenkis dimension is a numeric parameter capturing the complexity of a set system \cite{Vapnik1971}.

\begin{definition}[VC-dimension of a set system]
    Consider a set system $\mathcal{S} = (U, \mathcal{F})$, where $U$ is a finite set and $\mathcal{F}$ is a family of subsets of $U$.
    We say that $X \subseteq U$ is \emph{shattered} if
    \[
        \big| \{X \cap F: F \in \mathcal{F} \} \big| = 2^{|X|}
        .
    \]
    The \emph{VC-dimension of the set system} $\mathcal{S}$, denoted by $\VCdim(\mathcal{S})$, is the cardinality of the largest shattered subset of $U$.
\end{definition}

There are several natural set systems, whose VC-dimension is of our interest.
For a graph $G$, we have the system $(V, \{N(v) : v \in V\})$ of neighborhoods of $G$.
More generally, for a matrix $A \in \{0,1\}^{m \times n}$, we consider the system of columns $([m], \{A_{*,j} : j \in [n]\})$, where the column $A_{*,j}$ stands for the set $\{i \in [m] : A_{i,j} = 1\}$.
Similarly, the matrix $A$ has an associated system of rows $([n], \{A_{i,*} : i \in [m]\})$.
Note that if $A$ is the adjacency matrix of a graph $G$, then the column and row systems of $A$ are the same (as $A$ is symmetric), and they agree with the system of neighborhoods of $G$.

We use these systems to define the VC-dimension of graphs and matrices.

\begin{definition}[VC-dimension of a graph]
    The \emph{VC-dimension of a graph} $G = (V,E)$, $\VCdim(G)$, is defined as the VC-dimension of the system $(V, \{N(v) : v \in V\})$ of neighborhoods of $G$.
\end{definition}

\begin{definition}[VC-dimension of a matrix]
    The \emph{VC-dimension of a matrix} $A \in \{0,1\}^{m \times n}$, $\VCdim(A)$, is defined as the maximum of the VC-dimensions the system of its columns $([m], \{A_{*,j} : j \in [n]\})$ and the system of its rows $([n], \{A_{i,*} : i \in [m]\})$.
\end{definition}

As hinted, the definition of the VC-dimension is consistent for a graph and its adjacency matrix.
Furthermore, the definitions also agree for a bipartite graph and its biadjacency matrix.

\begin{observation}
    Let $G$ be a graph and $A$ its adjacency matrix, then $\VCdim(G) = \VCdim(A)$.
    Moreover, if $G$ is bipartite and $B$ is its biadjacency matrix, then $\VCdim(G) = \VCdim(B)$.
\end{observation}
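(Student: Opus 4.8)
The plan is to prove the two equalities separately. For the adjacency matrix $A$ of $G=(V,E)$, reading the $j$-th column $A_{*,j}$ as the set $\{i:A_{i,j}=1\}\subseteq[n]\cong V$ gives exactly $\{v_i:v_i\sim v_j\}=N(v_j)$; hence the column system $([n],\{A_{*,j}\})$ of $A$ is literally the neighborhood system $(V,\{N(v)\})$ of $G$, and since $A$ is symmetric its row system is the very same set system. Both thus coincide with the neighborhood system of $G$, and $\VCdim(A)=\VCdim(G)$ with nothing further to check.

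For the biadjacency matrix $B\in\{0,1\}^{|U|\times|V|}$ of a bipartite $G=(U,V,E)$, I would first record the dictionary: for $u_i\in U$ the neighborhood $N(u_i)\subseteq V$ is the set coded by the $i$-th row of $B$, and for $v_j\in V$ the neighborhood $N(v_j)\subseteq U$ is the $j$-th column of $B$. So the neighborhood system of $G$, on ground set $U\sqcup V$, is obtained by gluing the row system of $B$ (supported on $V$) and the column system of $B$ (supported on $U$). The key step is to show that \emph{every set $W$ shattered in $G$ lies entirely in $U$ or entirely in $V$}: shattering $W$ requires in particular a vertex $w$ with $W\subseteq N(w)$, but in a bipartite graph $N(w)$ is contained in a single part, so a $W$ meeting both parts cannot be shattered.

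Granting this, fix a shattered $W$ with $|W|=\VCdim(G)$, say $W\subseteq V$. Every vertex of $V$ has its neighborhood inside $U$ and hence trace $\emptyset$ on $W$, so every nonempty trace on $W$ must be produced by a vertex of $U$, i.e. by a row of $B$; hence all nonempty traces of $W$ already occur within the row system of $B$, which modulo the empty trace gives $\VCdim(B)\ge\VCdim(G)$. The reverse inequality $\VCdim(G)\ge\VCdim(B)$ is immediate, since a set shattered by the row or the column system of $B$ is shattered in $G$ by the very same witnesses: those witnesses all lie on the part opposite to the shattered set, so the traces computed in $G$ and in the matrix system literally coincide. The one point I expect to need genuine care is the empty trace: in $G$ it is realized for free by any vertex on the same part as $W$, whereas in the row (resp.\ column) system of $B$ one must exhibit an actual all-zero-on-$W$ row (resp.\ column); matching these up exactly, rather than conceding a harmless additive constant, is the crux of the bipartite case.
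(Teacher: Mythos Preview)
The paper states this as an observation without proof, so there is nothing to compare against; your argument for the adjacency-matrix case is exactly right, and your bipartite analysis is more careful than the paper's.

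Your hesitation about the empty trace is fully justified: the bipartite equality is in fact false as stated, and the discrepancy you anticipate really does occur. Take $U=\{u_1,u_2,u_3\}$, $V=\{v_1,v_2\}$ with $u_1\sim v_1$, $u_2\sim v_2$, and $u_3$ adjacent to both. Then $\{v_1,v_2\}$ is shattered in $G$ --- the empty trace being supplied by $v_1$ itself --- so $\VCdim(G)=2$; but the biadjacency matrix has row system $\{\{1\},\{2\},\{1,2\}\}$ (missing $\emptyset$) and only two columns, so $\VCdim(B)=1$. Hence you cannot match these up exactly: what holds in general is only $\VCdim(B)\le\VCdim(G)\le\VCdim(B)+1$, the upper bound following from your observation that all $2^d-1$ nonempty traces on a shattered $d$-set $W\subseteq V$ are realised by rows, combined with Sauer--Shelah. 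This is harmless downstream, since the paper only ever uses that bounded VC-dimension transfers between $G$ and $B$, but the observation as written needs precisely the additive correction you anticipated rather than a sharper argument you were missing.
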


VC-dimension is monotone with respect to taking submatrices (i.e. induced subgraphs) and their complements.

\begin{observation}\label{obs:monotonicity_of_vc_dim}
    If $B$ is (the binary complement of) a submatrix of $A$ (or its transpose), then $\VCdim(B) \leq \VCdim(A)$.
\end{observation}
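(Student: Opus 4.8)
The plan is to reduce the statement to elementary facts about VC-dimension at the level of set systems and then compose them. Since $\VCdim(A)$ is by definition the maximum of the VC-dimensions of the column system $\mathcal{C}(A) = ([m], \{A_{*,j} : j \in [n]\})$ and the row system $\mathcal{R}(A) = ([n], \{A_{i,*} : i \in [m]\})$, it suffices to control each of these two systems separately under each of the three permitted operations: transposition, passing to a submatrix, and taking the binary complement. Transposition is immediate, since $\mathcal{C}(A^\top) = \mathcal{R}(A)$ and $\mathcal{R}(A^\top) = \mathcal{C}(A)$, so in fact $\VCdim(A^\top) = \VCdim(A)$.

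For submatrices, note first that permuting rows or columns merely relabels the ground sets and the members of the two systems, hence changes nothing, so it suffices to treat deletion. Let $B$ be the submatrix of $A$ on a row set $I \subseteq [m]$ and a column set $J \subseteq [n]$. I claim $\VCdim(\mathcal{C}(B)) \leq \VCdim(\mathcal{C}(A))$: if $X \subseteq I$ is shattered by $\mathcal{C}(B)$, then for every $Y \subseteq X$ there is $j \in J$ with $(A_{*,j} \cap I) \cap X = Y$; but $X \subseteq I$, so the rows outside $I$ are irrelevant and the very same column $j$ of $A$ witnesses $A_{*,j} \cap X = Y$. Thus $X$ is shattered by $\mathcal{C}(A)$. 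The symmetric argument — now using $X \subseteq J$ to make the deleted columns irrelevant — gives $\VCdim(\mathcal{R}(B)) \leq \VCdim(\mathcal{R}(A))$, and taking the maximum yields $\VCdim(B) \leq \VCdim(A)$.

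For the binary complement $\bar{A}$, I would invoke the general fact that a set system $\mathcal{S} = (U, \mathcal{F})$ and its complement $\bar{\mathcal{S}} = (U, \{U \setminus F : F \in \mathcal{F}\})$ have the same VC-dimension: a set $X \subseteq U$ is shattered by $\mathcal{S}$ iff for every $Y \subseteq X$ some $F \in \mathcal{F}$ satisfies $F \cap X = Y$, and since $(U \setminus F) \cap X = X \setminus (F \cap X)$ ranges over all subsets of $X$ exactly when $F \cap X$ does, $X$ is shattered by $\mathcal{S}$ iff it is shattered by $\bar{\mathcal{S}}$. Applying this to both $\mathcal{C}(A)$ and $\mathcal{R}(A)$ gives $\VCdim(\bar{A}) = \VCdim(A)$.

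Finally, any $B$ of the form described in the statement is obtained from $A$ by composing a transposition, a passage to a submatrix, and a complementation, each of which does not increase the VC-dimension; hence $\VCdim(B) \leq \VCdim(A)$. I do not anticipate any genuine obstacle — this is a routine sanity check — the only point requiring a little care being the bookkeeping of which ground set plays the role of the universe when rows versus columns are deleted, since deleting rows restricts the members of $\mathcal{R}(A)$ while deleting columns shrinks its ground set, and dually for $\mathcal{C}(A)$.
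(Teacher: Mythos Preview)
Your verification is correct. The paper itself states Observation~\ref{obs:monotonicity_of_vc_dim} without proof, treating it as self-evident, so there is no argument to compare against; your decomposition into transposition, deletion of rows/columns, and complementation, together with the elementary set-system facts you invoke, is exactly the kind of routine check the authors leave implicit.
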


\subsection{Growth function}

A closely related notion to VC-density is the \emph{growth function} of a system.

\begin{definition}[Growth function of a set system]
    Let $\mathcal{S} = (U, \mathcal{F}), |U| = N,$ be a set system.
    The growth function $\pi_\mathcal{S}: [N] \to \N$ is defined as
    \[
        \pi_\mathcal{S}(n) = 
        \max_{
            X \subseteq U, |X| = n
        } 
        \big| \{X \cap F: F \in \mathcal{F} \} \big|
        .
    \]
\end{definition}

The classical Sauer-Shelah lemma states that systems of bounded VC-dimension have the growth function bounded by a polynomial \cite{Sauer_1972}\cite{Shelah_1972}.

\begin{theorem}[Sauer-Shelah lemma \cite{Sauer_1972}\cite{Shelah_1972}]\label{thm:sauer_shelah}
    Let $\mathcal{S} = (U, \mathcal{F})$ be a set system of VC-dimension at most $d$.
    Then for any set $X \subseteq U$ of size $n$, we have
    \[
        \big| \{X \cap F: F \in \mathcal{F} \} \big| \leq \Phi_d(n) \coloneqq \sum_{i=0}^d \binom{n}{i}
        .
    \]
\end{theorem}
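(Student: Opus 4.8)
The plan is a standard double-counting induction on the size $n$ of $X$. First I would reduce to the trace: writing $\mathcal{G} = \{X \cap F : F \in \mathcal{F}\}$, a subset of $X$ is shattered by $\mathcal{G}$ exactly when it is shattered by $\mathcal{F}$, so $\VCdim(X,\mathcal{G}) \le d$, and it suffices to prove that any set system $(X,\mathcal{G})$ with $|X| = n$ and VC-dimension at most $d$ satisfies $|\mathcal{G}| \le \Phi_d(n)$. The base case $n = 0$ is immediate, since then $\mathcal{G} \subseteq \{\emptyset\}$ and $\Phi_d(0) = 1$, and the case $d = 0$ is also easy: no singleton is shattered, so all members of $\mathcal{G}$ agree and $|\mathcal{G}| \le 1 = \Phi_0(n)$.

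For the inductive step, assume $n \ge 1$ and $d \ge 1$, fix an element $x \in X$, and introduce two auxiliary systems on the ground set $X \setminus \{x\}$ of size $n-1$:
\[
  \mathcal{G}_1 = \{\, G \setminus \{x\} : G \in \mathcal{G} \,\},
  \qquad
  \mathcal{G}_2 = \{\, G \subseteq X \setminus \{x\} : G \in \mathcal{G} \text{ and } G \cup \{x\} \in \mathcal{G} \,\}.
\]
Grouping the members of $\mathcal{G}$ according to their intersection with $X \setminus \{x\}$, each group has one or two elements, $\mathcal{G}_1$ is in bijection with the groups, and $\mathcal{G}_2$ is in bijection with the two-element groups; hence $|\mathcal{G}| = |\mathcal{G}_1| + |\mathcal{G}_2|$.

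It then remains to bound the two pieces by the inductive hypothesis. Any set shattered by $\mathcal{G}_1$ is shattered by $\mathcal{G}$, so $\VCdim(\mathcal{G}_1) \le d$ and $|\mathcal{G}_1| \le \Phi_d(n-1)$. The system $\mathcal{G}_2$ has VC-dimension at most $d - 1$: if $Y \subseteq X \setminus \{x\}$ is shattered by $\mathcal{G}_2$, then for each $Y' \subseteq Y$ the witness $G \in \mathcal{G}_2$ with $G \cap Y = Y'$ also witnesses, through the pair $G$ and $G \cup \{x\}$, that $Y \cup \{x\}$ is shattered by $\mathcal{G}$, forcing $|Y| + 1 \le d$. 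Hence $|\mathcal{G}_2| \le \Phi_{d-1}(n-1)$, and Pascal's identity $\binom{n}{i} = \binom{n-1}{i} + \binom{n-1}{i-1}$ gives $\Phi_d(n-1) + \Phi_{d-1}(n-1) = \Phi_d(n)$, which closes the induction. The only point requiring care is the VC-dimension drop for $\mathcal{G}_2$ — the observation that shattering $Y$ by $\mathcal{G}_2$ propagates to shattering $Y \cup \{x\}$ by $\mathcal{G}$ — while everything else is bookkeeping and a routine binomial identity. (An alternative route is Alon's compression proof: repeatedly replace each $G \in \mathcal{G}$ by $G \setminus \{x\}$ whenever that set is not yet present; this preserves $|\mathcal{G}|$ and never increases the VC-dimension, and once $\mathcal{G}$ is downward closed any member of size $d+1$ would be shattered, so $\mathcal{G}$ consists only of sets of size at most $d$, of which there are $\Phi_d(n)$.)
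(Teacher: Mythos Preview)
Your proof is correct and is the standard induction-on-$n$ argument for the Sauer--Shelah lemma; the alternative compression argument you sketch is also valid. However, the paper does not supply its own proof of this statement: Theorem~\ref{thm:sauer_shelah} is quoted as a classical result with citations to \cite{Sauer_1972} and \cite{Shelah_1972}, so there is nothing in the paper to compare your argument against.
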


\subsection{Erd\H{o}s-Hajnal property}

As stated in the introduction, we say that a class of graphs $\mathcal{C}$ has the Erd\H{o}s-Hajnal property if the graphs from $\mathcal{C}$ admit polynomially large homogeneous subgraph.
A recent theorem of Nguyen, Scott, and Seymour that verifies the Erd\H{o}s-Hajnal property for graphs of bounded VC-dimension \cite{NSS_2023} plays a crucial part in our proof.

\begin{theorem}[\cite{NSS_2023}]\label{thm:poly_ramsey}
    For every $d \geq 1$, there exists $e(d) > 0$ such that every graph $G$ on $n$ vertices of VC-dimension at most $d$ contains a homogeneous subgraph of size at least $n^{e(d)}$.
\end{theorem}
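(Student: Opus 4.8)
The plan is to separate the argument into a reduction and a construction. Recall that a \emph{pure pair} in a graph $G$ is a pair of disjoint vertex sets $(A,B)$ with $A$ complete or anticomplete to $B$. For the reduction I would invoke the general machinery around the Erd\H{o}s--Hajnal property, in particular the recent work on pure pairs by Chudnovsky--Scott--Seymour and by Fox--Nguyen--Scott--Seymour: for a hereditary class of graphs the Erd\H{o}s--Hajnal property follows once every member $G$ on $n$ vertices contains a pure pair $(A,B)$ with $|A|$ and $|B|$ suitably large (for instance, both a fixed power of $n$). Deducing a homogeneous set of size $n^{e(d)}$ from this is a boosting argument: one recursively splits off pure pairs and bounds, down the recursion tree, how the clique number and the independence number combine at complete and at anticomplete steps, using that the class $\{G : \VCdim(G)\le d\}$ is hereditary and, by Observation~\ref{obs:monotonicity_of_vc_dim}, closed under complementation up to an additive constant in $d$, so the two kinds of step are symmetric. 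I would take this step off the shelf; making sure the pure pair comes out in exactly the quantitative form the boosting needs is itself part of the work.

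The content is then the construction of the pure pair, and this is where bounded VC-dimension must be used essentially. A first, ``cheap'' bound comes from the ultra-strong regularity lemma for graphs of bounded VC-dimension (Alon--Fischer--Newman, Lov\'asz--Szegedy, as used by Fox--Pach--Suk): for every $\eps>0$ one can partition $V(G)$ into $K = \mathrm{poly}(1/\eps)$ nearly equal parts so that all but an $\eps$-fraction of the pairs of parts are $\eps$-homogeneous (density below $\eps$ or above $1-\eps$). Discarding the parts appearing in too many exceptional pairs, building the ``majority'' graph $H$ on the survivors (an edge for each dense pair), and applying Ramsey's theorem to $H$, one gets $\Omega(\log K)$ parts that are pairwise uniformly sparse (or pairwise uniformly dense); within their union the few crossing edges can be cleaned away, and recursing inside the parts yields, after the right choice of $\eps=\eps(n)$ and some bookkeeping, a homogeneous set of size $\exp(\Omega(\sqrt{\log n}))$ --- quasipolynomial, but not yet polynomial.

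The hard part --- and the whole point of the theorem --- is upgrading this to a genuine power of $n$. Merely sharpening the regularity route does not seem to work: for instance, a natural attempt via Sauer--Shelah (Theorem~\ref{thm:sauer_shelah}) and Haussler's packing lemma produces a large set of vertices whose neighbourhoods all lie in one small symmetric-difference ball, but the packing bound forces the radius of that ball to exceed the size of the set, so the set is not usefully homogeneous. What is needed is a recursion that keeps exploiting the Sauer--Shelah polynomial bound at every scale --- peeling off a nearly-homogeneous piece, passing to it with a geometrically smaller error parameter, and controlling the competition between the shrinking ground set and the shrinking error --- so that the number of recursion levels stays $O(\log n / \log\log n)$ while the multiplicative gain per level stays bounded away from $1$; the extremal configurations (half-graphs and iterated blow-ups, both of bounded VC-dimension) have to be accommodated explicitly. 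I expect this boosting of the pure-pair bound from quasipolynomial to polynomial to be the crux of the entire proof.
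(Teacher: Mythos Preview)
This theorem is not proved in the paper at all: it is quoted verbatim from \cite{NSS_2023} and used as a black box (in Lemmas~\ref{lem:ab*_matrix} and~\ref{lem:abc_matrix}, and in the final assembly of Theorem~\ref{thm:poly_ding_matrices}). There is therefore nothing to compare your sketch against on the paper's side; the paper's ``proof'' is simply the citation.

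As for the sketch itself: you correctly identify the overall architecture (reduce Erd\H{o}s--Hajnal to a pure-pair statement, then prove the pure-pair statement for bounded VC-dimension), and you correctly note that the ultra-strong regularity route only gives a quasipolynomial bound --- this was the state of the art prior to \cite{NSS_2023}. But the final paragraph is not a proof, it is a wish: you say what the recursion ``should'' do without exhibiting the mechanism that makes it work, and you explicitly flag the polynomial pure-pair step as the part you do not know how to carry out. That step \emph{is} the theorem; everything before it was already known. So your proposal is an accurate map of the landscape around the result, but it does not contain the new idea of \cite{NSS_2023} (their iterated ``blurring''/density-increment argument that extracts a polynomial-sized pure pair directly, bypassing the regularity-plus-Ramsey bottleneck). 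For the purposes of this paper none of that matters, since the theorem is invoked, not reproved.
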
    
\section{Proof}\label{sec:proof}

Given a matrix of VC-dimension at most $d$ with at least $n$ columns, no two of which are identical, our goal is to find an inhomogeneous square $(\alpha, \beta, \gamma)$-submatrix with $\alpha \not= \beta$ of polynomial size.
The original proof of Ding et al. is carried out in two steps.
First, they obtain a square $(\alpha, \beta, *)$-submatrix of the given matrix with $\alpha \not= \beta$, where $*$ is a Joker (i.e. no restriction for entries below the diagonal).
As the second step, they apply the Ramsey theorem to an auxiliary graph to homogenize the lower triangular matrix.
Both steps shrink the matrix logarithmically.

In our case, we prove that the auxiliary graph constructed from a matrix of bounded VC-dimension has bounded VC-dimension as well (Lemma~\ref{lem:vc_dim_of_upper_graph}).
This allows us to perform the second step from the $(\alpha, \beta, *)$-submatrix to the $(\alpha, \beta, \gamma)$-submatrix with a polynomial dependence by Theorem~\ref{thm:poly_ramsey}.

Moreover, we show that after a convenient preparatory step, it is possible to use the Ramsey theorem, i.e. in our case Theorem~\ref{thm:poly_ramsey}, in the first step to obtain the $(\alpha, \beta, *)$-submatrix.
Indeed, we initially produce a \emph{switch submatrix} (Lemma~\ref{lem:switch_matrix}), which allows us to choose the value $\beta$ on the diagonal \emph{after} we obtain the value $\alpha$ above the diagonal from the Ramsey theorem (Lemma~\ref{lem:ab*_matrix}).
Then it remains to homogenize the values below the diagonal as mentioned above (Lemma~\ref{lem:abc_matrix}).

\subsection{Switch submatrix}

We begin by defining the notion of a \emph{switch matrix}.
Then we show that under the assumptions of Theorem~\ref{thm:poly_ding_matrices}, the given matrix contains a polynomially large switch submatrix.

\begin{definition}
    A matrix $A \in \{0,1\}^{n \times 2n}$ is a \emph{switch matrix of size $n$} if it is of the following form:
    \begin{enumerate}[(i)]
        \item for all $i \in [n]$, we have $A_{i,2i-1} = 0$ and $A_{i,2i} = 1$,
        \item for all $i,j \in [n], i < j$, we have $A_{i,2j-1} = A_{i,2j}$.
    \end{enumerate}
\end{definition}

The following important technical lemma says that in our setting of bounded VC-dimensions we have polynomially large switch submatrices.
We present the lemma for a general bounding function $f: \R^+_0 \to \R^+_0$ that satisfies some reasonable assumptions. 
That is, we assume that $f$ is convex with $f(0) = 0$ and $f(x) \geq 2x$ for all $x \in \R^+_0$.
Note that this implies $f$ to be strictly increasing (by Jensen's inequality).
Thus, $f$ has a well-defined (concave) inverse $f^{-1}$.

\begin{lemma}\label{lem:switch_matrix}
    Let $A$ be a matrix whose system of columns has a growth function $\pi \leq f$ with $f$ as above.
    Suppose $A$ has $n$ columns, no two of which are identical.
    Then $A$ contains a switch submatrix $B$ of size at least $\frac{1}{2} f^{-1}(n)$.
\end{lemma}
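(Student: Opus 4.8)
We want to find, inside a matrix $A$ with $n$ distinct columns and polynomially bounded column growth function, a switch submatrix of size $\Omega(n^{1/a})$. A switch matrix of size $m$ has $m$ rows and $2m$ columns organized in $m$ consecutive pairs; the $i$-th pair "disagrees" exactly in row $i$, is free (either $0$ or $1$, but equal within the pair) in rows $1,\dots,i-1$, and is unconstrained in rows $i+1,\dots,m$. So the construction is triangular: choosing a row to be the "separating" row for a pair, then restricting attention to the rows above it.

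**The core idea: iteratively build rows and pairs of columns.** The plan is to build the switch submatrix greedily, maintaining a shrinking pool of candidate columns. Start with all $n$ columns. At step $t$ (having already selected rows $r_1,\dots,r_{t-1}$ and $t-1$ column pairs), look at the current pool $P_t$ of columns, projected onto the already-selected rows: since no two original columns are identical, and the pool is large, there must be two columns $c, c'$ in $P_t$ that agree on the rows $r_1,\dots,r_{t-1}$ selected so far but differ somewhere; pick a row $r_t$ where they differ — that row becomes the next diagonal row, and $\{c,c'\}$ becomes the next pair (reorder so the $0$ is in the column indexed $2t-1$ and the $1$ in column $2t$). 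Crucially, for condition (ii) we need these two columns to agree on \emph{all} rows chosen so far, which they do by construction. Then, to continue, replace the pool by the set of columns that agree with the chosen pair on row $r_t$ — actually we must be more careful: the pool for the next step should consist of columns all lying in one "$\cap$-class" on the rows chosen so far, so that any future pair automatically agrees on those rows. This is where the growth-function bound enters.

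**Where the quantitative bound comes from.** The set system on columns restricted to the $m$ rows we eventually choose has at most $\pi(m)\le b m^a$ distinct traces. We run the pigeonhole in reverse: if we are to extract $m$ rows, we need that at every one of the $m$ stages the surviving pool still contains at least two columns sharing a common trace on the rows so far. Starting from $n$ columns and passing at each step to a largest trace-class, after choosing $m$ rows the pool has size at least $n / \pi(m) \ge n/(b m^a)$; as long as this is $\ge 2$ we can take another step. Solving $n/(b m^a) \ge 2$, i.e. $m \le (n/(2b))^{1/a}$, but we can sharpen the counting. At each step we split the current pool according to the trace on the newly added row (two classes, $0$ and $1$), keep the larger — losing a factor $2$ per step — but also we only need the final pool to have size $\ge 2$; a careful accounting (splitting by each new row value rather than by the full trace) gives pool size $\ge n/2^{m}$ after $m$ rows, which is too weak. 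The right accounting is: we do not split by individual rows but maintain that the pool is a single trace-class on all chosen rows, and we jump to the next step only once; the number of trace-classes on $m$ rows is $\le \pi(m)\le bm^a$, so \emph{some} class has $\ge n/(bm^a)$ columns, and we want this class still to admit distinct columns, i.e. size $\ge 2$, giving $m \approx (n/(2b))^{1/a}$. To get the stated constant $z<1/(2+b)$ one does the bookkeeping with $\pi(m+1)$ or adds the "$+$'' term from needing $2$ columns; I expect the clean argument to choose $m=\lfloor z n^{1/a}\rfloor$ and verify $bm^a + 2 \le n$ using $z<1/(2+b)$ and $a\ge1$.

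**Main obstacle.** The delicate point is condition (ii): every selected pair $\{c,c'\}$ must agree on \emph{all} earlier chosen rows simultaneously, not just pairwise with each earlier row. The clean way to guarantee this is to never split the pool row-by-row but to always pass to a maximum-size class under the \emph{full} trace map on the chosen rows; then any two columns remaining in the pool automatically agree on every chosen row, so conditions (i)–(ii) hold by construction once we orient the pair and choose the separating row. Making sure the recursion stays inside one trace-class while still finding a \emph{new} separating row at each step — i.e. that the max class is not a single column — is exactly what the Sauer–Shelah-type bound $\pi(m)\le bm^a$ controls, and verifying the resulting inequality $b m^a \le n - 2$ (or a slightly stronger form) for $m = \lfloor z n^{1/a}\rfloor$, $z<1/(2+b)$, is the remaining routine calculation.
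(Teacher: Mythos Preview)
Your greedy framework matches the paper's, and you have correctly located where the growth-function bound enters. But the accounting step is the heart of the argument, and your write-up wavers between two genuinely different mechanisms without committing to the one that works.

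The mechanism you describe first and return to in the ``Main obstacle'' paragraph --- maintain a nested pool $P_t\supseteq P_{t+1}$, at each step restricting to the columns that agree with the chosen pair on the new row --- loses a factor of~$2$ per step, as you yourself compute, and gives only $|P_m|\gtrsim n/2^m$. No amount of rephrasing (``full trace map'' vs.\ ``row-by-row'') rescues this: once you insist that $P_{t+1}\subseteq P_t$, you are splitting a single class by one new row, and that is a binary split.

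The paper's argument drops the pool entirely. At step~$i$, with rows $\mathcal R$ ($|\mathcal R|=i$) and used columns $\mathcal C$ ($|\mathcal C|=2i$) already fixed, partition \emph{all} $n$ columns by their trace on~$\mathcal R$; there are at most $\pi(i)\le bi^a$ nonempty classes, so by averaging some class $\Gamma(x^*)$ has $|\Gamma(x^*)\setminus\mathcal C|\ge (n-2i)/(bi^a)$. Whenever this exceeds~$1$, take two unused columns from it, find a row on which they differ, and extend. Nothing forces $\Gamma(x^*)$ at step~$i$ to sit inside the class used at step~$i-1$: condition~(ii) only asks each \emph{pair} to agree internally on earlier rows, not that different pairs share a trace. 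Your phrase ``some class has $\ge n/(bm^a)$ columns'' is the right averaging, but it must be applied afresh at every step over all unused columns, not once globally after choosing $m$ rows.

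Finally, the inequality to check is $(n-2i)/(bi^a)>1$ for all $i<zn^{1/a}$, i.e.\ $n>2i+bi^a$; your ``$bm^a+2\le n$'' drops the $2i$ term to a constant, and does not yield $z<1/(2+b)$. Using $a\ge 1$ (so $z^a\le z$ and $n^{1/a}\le n$) one gets $2i+bi^a<2zn+bzn=(2+b)zn<n$, which is exactly the stated constraint on~$z$.
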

\begin{proof}
    Starting from the empty submatrix $B_0$, we greedily construct switch submatrices $B_1, B_2, \dots$, with $B_i$ having size $i$.
    Each $B_{i+1}$ is obtained by extending $B_i$ by a new row and two columns.
    We only need to prove that we can run the process for at least $\frac{1}{2} f^{-1}(n)$ steps.
    
    Suppose we have a switch submatrix $B_i$ of $A$ of size $i$ that uses rows $\mathcal{R}$ and columns $\mathcal{C}$ from the original matrix (likely in different order).
    For $x \in \{0,1\}^{\mathcal{R}}$, let $\Gamma(x)$ be the set of columns of $A$ that have the pattern $x$ at rows $\mathcal{R}$.
    To extend the submatrix $B_i$, we want to find a vector $x^* \in \{0,1\}^{\mathcal{R}}$ such that $|\Gamma(x^*) \setminus \mathcal{C}| \geq 2$.
    This is indeed sufficient: take distinct columns $C, C' \in \Gamma(x^*) \setminus \mathcal{C}$.
    By assumption, these columns differ at some row $R$, which certainly lies outside of $\mathcal{R}$.
    Thus, we can extend $B_i$ by the row $R$ and columns $C, C'$ to form the submatrix $B_{i+1}$.
    To be precise, the last column is the one of $C$ or $C'$ that has the value $1$ in the row $R$.
    Clearly, the resulting submatrix $B_{i+1}$ is then a switch matrix of size $i+1$.
    
    It remains to show that such a vector $x^* \in \{0,1\}^{\mathcal{R}}$ exists when extending the submatrix $B_i$ with $i < \frac{1}{2} f^{-1}(n)$.
    We have at most $\pi(i)$ vectors $x \in \{0,1\}^{\mathcal{R}}$ for which the set $\Gamma(x)$ is non-empty.
    Therefore, the average size of a non-empty $\Gamma(x) \setminus \mathcal{C}$ is at least
    \[
        \frac{n-|\mathcal{C}|}{\pi(i)} \geq \frac{n - 2i}{f(i)} = \frac{n}{f(i)} - \frac{2i}{f(i)}
        ,
    \]
    using $\pi(i) \leq f(i)$ and $|\mathcal{C}| = 2i$.
    Clearly, $\frac{2i}{f(i)} \leq 1$ due to the assumption $f(x) \geq 2x$.
    We proceed to lower-bound the other fraction.
    We have
    \[
        \frac{n}{f(i)} > \frac{n}{f(\frac{1}{2} f^{-1}(n))} = 2 \cdot\frac{\frac{1}{2}n}{f(\frac{1}{2} f^{-1}(n))}
        ,
    \]
    where the inequality follows from $i < \frac{1}{2} f^{-1}(n)$ and the fact that $f$ is strictly increasing.
    We claim that the last fraction is at least $1$.
    Indeed, this statement is equivalent to
    \[
        f\left(\frac{1}{2} f^{-1}(n)\right) \leq \frac{1}{2}n 
        \quad \Leftrightarrow \quad
        \frac{1}{2} f^{-1}(n) \leq f^{-1}\left(\frac{1}{2}n\right)
        \quad \Leftrightarrow \quad        
        \frac{f^{-1}(0) + f^{-1}(n)}{2} \leq f^{-1}\left(\frac{0 + n}{2}\right)
        ,
    \]
    using that $f^{-1}(0) = 0$, where the last inequality is exactly the Jensen inequality applied to the concave function $f^{-1}$.
    To conclude, we obtain
    \[
        \frac{n-|\mathcal{C}|}{\pi(i)} \geq \frac{n}{f(i)} - \frac{2i}{f(i)} > 2 \cdot 1 - 1 = 1
        ,
    \]
    and therefore, $|\Gamma(x^*) \setminus \mathcal{C}| \geq 2$.
\end{proof}

We get an immediate corollary for matrices of bounded VC-dimension.

\begin{corollary}\label{cor:switch_matrix_for_bounded_VC}
    Let $A$ be a matrix of VC-dimension at most $d$ with $n$ columns, no two of which are identical.
    Then $A$ contains a switch submatrix $B$ of size $\Omega(n^{1/d})$.
\end{corollary}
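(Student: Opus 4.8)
The plan is to derive the corollary directly from Lemma~\ref{lem:switch_matrix} by feeding in the polynomial growth bound supplied by the Sauer--Shelah lemma (Theorem~\ref{thm:sauer_shelah}). We may first assume $d \ge 1$: if $d = 0$ then no single row is shattered by the columns, so all columns of $A$ coincide, and the hypothesis that no two columns are identical forces $A$ to have at most one column, making the statement vacuous.

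Since $\VCdim(A) \le d$, by definition the system of columns $([m], \{A_{*,j} : j \in [n]\})$ has VC-dimension at most $d$, so Theorem~\ref{thm:sauer_shelah} bounds its growth function by $\pi(n') \le \Phi_d(n') = \sum_{i=0}^{d}\binom{n'}{i}$. To match the hypothesis of Lemma~\ref{lem:switch_matrix}, I would replace this by a crude monomial bound: for $n' \ge 1$ and $d \ge 1$ one has $\binom{n'}{i} \le n'^{i} \le n'^{d}$ whenever $i \le d$, hence $\pi(n') \le \Phi_d(n') \le (d+1)\, n'^{d}$. Thus Lemma~\ref{lem:switch_matrix} applies with $a = d \ge 1$ and $b = d+1 > 0$, and it produces a switch submatrix of size at least $z\, n^{1/d}$ for every $z < 1/(2+b) = 1/(d+3)$. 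Fixing, say, $z = 1/(d+4)$ yields a switch submatrix of size at least $n^{1/d}/(d+4) = \Omega_d(n^{1/d})$, as required.

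There is no genuine obstacle here: the corollary is immediate once Lemma~\ref{lem:switch_matrix} is in hand, and the only points requiring a line of care are the polynomial estimate on $\Phi_d$ --- any degree-$d$ bound works, the precise constant only affecting the hidden $\Omega_d$-constant through the admissible range of $z$ --- and the trivial edge case $d = 0$ noted above.
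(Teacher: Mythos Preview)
Your proof is correct and follows exactly the approach of the paper: bound the growth function of the column system by $\Phi_d(n) \in \mathcal{O}_d(n^d)$ via the Sauer--Shelah lemma, then plug this into Lemma~\ref{lem:switch_matrix}. You simply spell out the constants and the $d=0$ edge case that the paper leaves implicit.
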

\begin{proof}
    By Theorem~\ref{thm:sauer_shelah}, the class of matrices of VC-dimension at most $d$ has the growth function bounded by $\Phi_d(n) \in \mathcal{O}(n^d)$.
    Clearly, a function $cn^d \geq \Phi_d(n)$ with $c \geq 2, d \geq 1$ satisfies all the assumptions on the function $f$ from Lemma~\ref{lem:switch_matrix}.
    Hence, we obtain a switch submatrix of $B$ of the size $\frac{1}{2c} n^{1/d}$ by Lemma~\ref{lem:switch_matrix}.
\end{proof}

\subsection{Upper graph and its VC-dimension}

Here we define the notion of an auxiliary graph called \emph{upper graph} associated to a square matrix $A$.
We prove that if $A$ has bounded VC-dimension, the corresponding upper graph has bounded VC-dimension as well.

\begin{definition}
    Let $A$ be a square matrix of size $n$.
    We call the graph $G = ([n], \{\{i,j\} : i < j, A_{i,j} = 1\})$ the \emph{upper graph} of the matrix $A$.
\end{definition}

In other words, the upper graph of the matrix $A$ is the graph $G$ whose adjacency matrix agrees with $A$ in the entries above the diagonal.

\begin{lemma}\label{lem:vc_dim_of_upper_graph}
    Let $A \in \{0,1\}^{n \times n}$ be a square $(*, 0, *)$-matrix and $G$ be the upper graph of $A$.
    For any $d \in \N$, we have that $\VCdim(G) \geq 4d$ implies $\VCdim(A) \geq d$.
    
    Therefore, if $\VCdim(A) \leq d$, then $\VCdim(G) \leq 4d+3$.
\end{lemma}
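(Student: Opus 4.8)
My plan is to reduce the whole lemma to the single estimate $\VCdim(G) \le 2\,\VCdim(A) + 2$. Granting this, the second assertion is immediate since $2d+2 \le 4d+1$ for $d \ge 1$, and the first is its contrapositive; the remaining tiny cases are trivial, because $\VCdim(A) = 0$ forces every row of $A$ to be constant, hence $A \equiv 0$ (the diagonal is zero) and $G$ edgeless, so $\VCdim(G)=0$. To prove the estimate I would take a set $S = \{s_1 < s_2 < \dots < s_t\} \subseteq [n]$ shattered by the neighborhood system of $G$ and produce a subset of $S$ of size $\lceil t/2\rceil - 1$ that is shattered by the system of columns, or by the system of rows, of $A$. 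The one fact I would lean on is that for every vertex $v$,
\[ N_G(v) \cap S = \{u \in S : u < v \text{ and } A_{u,v} = 1\} \sqcup \{u \in S : u > v \text{ and } A_{v,u} = 1\}, \]
so the trace $N_G(v)\cap S$ is the disjoint union of the trace of column $v$ of $A$ on the prefix of $S$ lying below $v$ and the trace of row $v$ of $A$ on the suffix of $S$ lying above $v$.

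Next I would fix the halving $P = \{s_1,\dots,s_m\}$, $Q = \{s_{m+1},\dots,s_t\}$ with $m = \lceil t/2\rceil$, and split into two cases. In the first, some $T\subseteq P$ with $s_m\in T$ fails to be a column trace of $A$ on $P$; fixing such a $T$, I claim that for every $R\subseteq Q$, any vertex $v$ with $N_G(v)\cap S = T\cup R$ must satisfy $v < s_m$. Indeed $v > s_m$ is impossible, since then all of $P$ lies below $v$, so $N_G(v)\cap P = (T\cup R)\cap P = T$ would be the trace of column $v$ on $P$; and $v = s_m$ is impossible since $s_m\in T\subseteq N_G(v)$ while $v\notin N_G(v)$. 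Hence $v<s_m$, so all of $Q$ lies above $v$, and $N_G(v)\cap Q = R$ is the trace of row $v$ on $Q$; as $R$ was arbitrary, $Q$ is shattered by the system of rows and $\VCdim(A)\ge |Q| = \lfloor t/2\rfloor$. In the complementary case, every $T\subseteq P$ containing $s_m$ is a column trace of $A$ on $P$, and intersecting these traces with $\{s_1,\dots,s_{m-1}\}$ exhibits every subset of that $(m-1)$-set as a column trace, so it is shattered and $\VCdim(A)\ge m-1 = \lceil t/2\rceil - 1$. Either way $\VCdim(A)\ge \lceil t/2\rceil - 1$, which gives $t\le 2\,\VCdim(A)+2$.

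The step I expect to be the crux --- and the reason for committing to a fixed split $P\sqcup Q$ rather than cutting $S$ at the (varying) position of each realizing vertex --- is avoiding a logarithmic loss: the naive route would distribute the $2^t$ realized subsets over the $t+1$ possible positions and then over the polynomially many (by Sauer--Shelah) column and row traces available on each side, yielding only $\VCdim(G) = O\!\big(\VCdim(A)\log\VCdim(A)\big)$. The dichotomy above dodges this precisely because a set $T$ that is not already a column trace \emph{pins} the realizer of $T\cup R$ strictly to the left of $\max P$, which is exactly what turns the $Q$-part of its neighborhood into a genuine row trace; and if no such $T$ exists, then $P$ with its top element deleted is already shattered. (Note that this argument uses nothing about the diagonal of $A$; the diagonal hypothesis only serves to make the $\VCdim(A)=0$ case clean.)
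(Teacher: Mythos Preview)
Your argument is correct and in fact yields a sharper inequality than the paper's: you establish $\VCdim(G)\le 2\,\VCdim(A)+2$, whereas the paper only reaches $\VCdim(G)\le 4\,\VCdim(A)+1$. The routes differ substantively. The paper halves the shattered set $X$ into $X_1,X_2$ and also splits the set $Y$ of witnesses by position relative to $\max X_1$; the larger half of $Y$ then induces a submatrix of $A$ on the rows $X_1$ with at least $2^{d-1}$ distinct columns, and an appeal to the Sauer--Shelah lemma converts this count into the lower bound $\VCdim(A)\ge d/2$. Your proof avoids Sauer--Shelah entirely: the dichotomy on whether some $T\ni s_m$ fails to be a column trace on $P$ directly exhibits either a row-shattered set $Q$ or a column-shattered set $P\setminus\{s_m\}$. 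The pinning observation---that a non-column-trace $T$ forces every realizer of $T\cup R$ strictly below $s_m$---is the genuinely new idea here and is precisely what buys the factor-of-two improvement over the counting route. One minor quibble: the phrase ``the first is its contrapositive'' is imprecise, since the first assertion is not literally the contrapositive of the second; what you actually show (and what your treatment of the case $\VCdim(A)=0$ cleanly handles) is that both assertions of the lemma follow from your single inequality together with that trivial base case.
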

\begin{proof}
    Let $B$ be the adjacency matrix of $G$.
    The VC-dimension of $G$ is at least $4d$, which is witnessed by a shattered set $X = \{v_1 < \dots < v_{4d}\} \subseteq [n]$ of size $4d$.
    Let $Y = \{u_1 < \dots < u_{2^{4d}}\} \subseteq [n]$ be the set of vertices whose neighborhoods witness that $X$ is shattered.

    We divide the set $X$ into $X_1 = \{v_1 < \dots < v_{2d}\}$ and $X_2 = \{v_{2d+1} < \dots < v_{4d}\}$.
    Moreover, we split the set $Y$ into $Y_1 = \{u_i \in Y : u_i < v_{2d}\}$ and $Y_2 = \{u_i \in Y : u_i \geq v_{2d}\}$.
    Certainly, one of the sets $Y_1$ or $Y_2$ has size at least $2^{4d-1}$.
    If $|Y_2| \geq 2^{4d-1}$, the submatrix $C$ of $B$ induced by the rows $X_1$ and columns $Y_2$ lies entirely in the upper triangle of $B$, so it is a submatrix of $A$ as well.
    In the other case, if $|Y_1| \geq 2^{4d-1}$, the submatrix $D$ of $B$ induced by $X_2 \times Y_1$ lies in the bottom triangle of $B$.
    By the symmetry of $B$, the transpose of $D$ appears in the upper triangle of $B$ and hence also in $A$.
    The rest of the proof is then the same with the submatrix $D^\top$ instead of $C$ and with the role of rows and columns reversed.
    
    Suppose that $|Y_2| \geq 2^{4d-1}$.
    As we argued in the previous paragraph, the submatrix $C$ of $B$ induced by the rows $X_1$ and columns $Y_2$ is a submatrix of $A$ as well.
    We claim that the rows $X_1$ of $A$, in particular the submatrix $C$, witness that $\VCdim(A) \geq d$.
    The matrix $C$ contains at least $2^{4d - 1} / \, 2^{2d} = 2^{2d-1}$ distinct columns since it has at least $2^{4d-1}$ columns in total and any pattern may occur in at most $2^{|X_2|} = 2^{2d}$ of them.
    From Theorem~\ref{thm:sauer_shelah}, we obtain the inequality
    \[
        2^{2d-1} \leq \text{ \#distinct patterns on $X_1$ } \leq \Phi_{\VCdim(A)}(2d) = \sum_{i=0}^{\VCdim(A)} \binom{2d}{i}
        .
    \]
    This implies $\VCdim(A) \geq d$ as we have the equality $\Phi_{d}(2d) = 2^{2d-1}$ by basic properties of binomial coefficients.
    
    The part ``Therefore'' is the contrapositive that includes the values of $\VCdim(G)$ that are not divisible by $4$.
\end{proof}

\subsection{Homogenization}

In this part, we utilize Theorem~\ref{thm:poly_ramsey} to first extract an $(\alpha, \beta, *)$-matrix with $\alpha \not= \beta$ from a switch matrix and then an $(\alpha, \beta, \gamma)$-matrix from the $(\alpha, \beta, *)$-matrix.
The polynomial dependency in Theorem~\ref{thm:poly_ramsey} ensures polynomial dependency in our statements.

\begin{lemma}\label{lem:ab*_matrix}
    Let $A$ be a switch matrix of size $n$ of VC-dimension at most $d$.
    Then it contains a square $(\alpha, \beta, *)$-matrix with $\alpha \not= \beta$ of size at least $n^{e(4d+3)}$, where $e(\cdot)$ is the function from Theorem~\ref{thm:poly_ramsey}.
\end{lemma}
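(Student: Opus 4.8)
The plan is to take the switch matrix $A$ of size $n$, throw away the even-indexed columns to land on an honest square matrix, apply Theorem~\ref{thm:poly_ramsey} to its upper graph to homogenize the entries strictly above the diagonal, and then exploit the defining property of a switch matrix to recover whichever diagonal value is needed so that $\alpha \neq \beta$. Concretely, let $A' \in \{0,1\}^{n \times n}$ be the submatrix of $A$ formed by the odd columns, i.e. $A'_{i,j} = A_{i,2j-1}$. Property~(i) of a switch matrix gives $A'_{i,i} = 0$ for all $i$, so $A'$ is a $(*,0,*)$-matrix, and property~(ii) says that for $i < j$ the entry $A_{i,2j-1}$ equals $A_{i,2j}$; this is the ``switch'' we keep in reserve. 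By Observation~\ref{obs:monotonicity_of_vc_dim}, $\VCdim(A') \leq \VCdim(A) \leq d$.

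Next I would form the upper graph $G$ of $A'$. By Lemma~\ref{lem:vc_dim_of_upper_graph}, since $\VCdim(A') \leq d$ we get $\VCdim(G) \leq 4d+1$. Applying Theorem~\ref{thm:poly_ramsey} to $G$ yields a homogeneous set $W \subseteq [n]$ with $|W| \geq n^{e(4d+1)}$; write $W = \{w_1 < \dots < w_m\}$ with $m = |W|$. Restricting $A'$ to rows and columns indexed by $W$ (in increasing order) produces an $m \times m$ submatrix in which all entries strictly above the diagonal are a single constant value $\alpha \in \{0,1\}$ — namely $\alpha = 1$ if $W$ is a clique of $G$ and $\alpha = 0$ if $W$ is an independent set — the diagonal entries are all $0$ (inherited from $A'$), and the entries below the diagonal are arbitrary. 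This is already a $(\alpha, 0, *)$-matrix, so if $\alpha = 1$ we are done with $\beta = 0$.

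The one remaining case is $\alpha = 0$, where the matrix obtained so far has a $0$ on the diagonal and we need $\beta \neq \alpha = 0$, i.e. $\beta = 1$. Here is where the switch columns are used: for each $w_k$ with $k < m$ replace the column coming from $A'$ (the odd column $2w_k - 1$) by the corresponding even column $2w_k$ of $A$; for the last index $w_m$ keep a column that is $1$ in row $w_m$. By property~(ii), for $i = w_{k'} < w_k = j$ (so above the diagonal) the even column agrees with the odd one, hence still carries the value $\alpha = 0$; by property~(i) the even column $2w_k$ has value $1$ in row $w_k$, so the diagonal entry becomes $1$. Entries below the diagonal remain unconstrained. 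This gives a $(0,1,*)$-matrix of size $m \geq n^{e(4d+1)}$, completing the proof. I do not expect a serious obstacle here — everything is a careful bookkeeping of which columns and rows survive each restriction — but the one point that needs care is making sure the column swap in the last paragraph does not disturb the already-homogenized above-diagonal entries, which is exactly guaranteed by clause~(ii) of the switch-matrix definition, and that we only ever use the swap when $\alpha = 0$ so that $\alpha \neq \beta$ is preserved.
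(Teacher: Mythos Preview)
Your argument is correct and follows essentially the same route as the paper: pass to the odd-column square submatrix, bound the VC-dimension of its upper graph via Lemma~\ref{lem:vc_dim_of_upper_graph}, apply Theorem~\ref{thm:poly_ramsey}, and then choose odd or even columns according to whether the homogeneous set is a clique or an independent set. The only cosmetic difference is that in the independent-set case the paper uniformly takes the even columns $\{2i : i \in X\}$, whereas you single out the last index $w_m$ unnecessarily; since property~(ii) applies to the pair $(2w_m-1, 2w_m)$ for every row $w_k < w_m$, the even column $2w_m$ works just like the others and no special treatment is needed.
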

\begin{proof}
    First, we consider the square $(*,0,*)$-submatrix $B$ of $A$ obtained by deleting all even columns of $A$.
    Let $G$ be the upper graph of $B$.
    By Observation~\ref{obs:monotonicity_of_vc_dim} and Lemma~\ref{lem:vc_dim_of_upper_graph}, the VC-dimension of $G$ is at most $4d+3$.
    Theorem~\ref{thm:poly_ramsey} then implies that $G$ contains a homogeneous subgraph on a vertex set $X$ of size at least $n^{e(4d+3)}$.
    We consider the submatrix $C$ of $A$ with rows $\mathcal{R} = X$, and columns $\mathcal{C} = \{2i - 1 : i \in X\}$ if $G[X]$ is a clique, or $\mathcal{C} = \{2i : i \in X\}$ if $G[X]$ is an independent set.
    Clearly, $C$ is a square matrix of size at least $n^{e(4d+3)}$.
    
    We claim that the matrix $C$ is an $(\alpha, \beta, *)$-matrix with $\alpha \not= \beta$.
    If $G[X]$ is a clique, then for each $i,j \in X, i < j$, we have that $A_{i,2j-1} = 1$.
    Moreover, by definition of a switch matrix, we have for all $i \in X$ that $A_{i,2i-1} = 0$.
    Therefore, the matrix $C$ is a $(1,0,*)$-matrix.
    
    If $G[X]$ is an independent set, then for each $i,j \in X, i < j$, we have that $A_{i,2j-1} = 0$.
    Thus, also $A_{i,2j} = 0$ by definition of a switch matrix.
    Moreover, we have for all $i \in X$ that $A_{i,2i} = 1$.
    Therefore, the matrix $C$ is a $(0,1,*)$-matrix.
\end{proof}

\begin{lemma}\label{lem:abc_matrix}
    Let $A$ be a square $(\alpha, \beta, *)$-matrix of size $n$ of VC-dimension at most $d$.
    Then it contains a square $(\alpha, \beta, \gamma)$-matrix of size at least $n^{e(4d+3)}$, where $e(\cdot)$ is the function from Theorem~\ref{thm:poly_ramsey}.
\end{lemma}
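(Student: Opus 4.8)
The plan is to run the proof of Lemma~\ref{lem:ab*_matrix} once more, but applied to the lower triangle of $A$ rather than the upper one. The crucial observation is that the entries of $A$ strictly below the diagonal are exactly the entries of the transpose $A^\top$ strictly above the diagonal, so the ``upper graph'' machinery of Section~\ref{sec:proof} (Lemma~\ref{lem:vc_dim_of_upper_graph} together with Theorem~\ref{thm:poly_ramsey}) applies to $A^\top$ essentially verbatim, provided the diagonal of $A^\top$ is put into the form required by Lemma~\ref{lem:vc_dim_of_upper_graph}.

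First I would reduce to the case $\beta = 0$. If $\beta = 1$ (so $\alpha = 0$ since $\alpha \ne \beta$), I replace $A$ by its binary complement $\bar{A}$, which is a $(1,0,*)$-matrix still of VC-dimension at most $d$ by Observation~\ref{obs:monotonicity_of_vc_dim}; any $(1,0,\gamma')$-submatrix of $\bar A$ produced by the argument below yields, after taking the binary complement back, a $(0,1,1-\gamma')$-submatrix of $A$ of the same size, as required. So assume $\beta = 0$. Then $A^\top$ is a $(*,0,\alpha)$-matrix, hence in particular a $(*,0,*)$-matrix, and $\VCdim(A^\top) \le \VCdim(A) \le d$ again by Observation~\ref{obs:monotonicity_of_vc_dim}.

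Next I would let $H$ be the upper graph of $A^\top$, i.e. the graph on $[n]$ with $i \sim j$ (for $i<j$) whenever $A_{j,i} = 1$; equivalently, $H$ records the below-diagonal entries of $A$. Lemma~\ref{lem:vc_dim_of_upper_graph} applied to $A^\top$ gives $\VCdim(H) \le 4d+1$, and Theorem~\ref{thm:poly_ramsey} then yields a homogeneous set $X \subseteq [n]$ of $H$ of size at least $n^{e(4d+1)}$. Take $C$ to be the submatrix of $A$ with both its rows and its columns indexed by $X$, listed in increasing order; this is a square matrix of size at least $n^{e(4d+1)}$. Its above-diagonal entries are all $\alpha$ and its diagonal entries are all $0$, since those positions are untouched by the restriction, while its below-diagonal entries are all $1$ if $X$ is a clique of $H$ and all $0$ if $X$ is independent. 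In either case $C$ is a $(\alpha,0,\gamma)$-matrix with $\alpha \ne 0$, and undoing the reduction of the second paragraph completes the proof.

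Since the actual content---bounding the VC-dimension of the auxiliary graph and invoking the polynomial Erd\H{o}s--Hajnal bound---is already packaged in Lemma~\ref{lem:vc_dim_of_upper_graph}, Theorem~\ref{thm:poly_ramsey}, and the structure of the proof of Lemma~\ref{lem:ab*_matrix}, I do not expect a serious obstacle here. The one point that needs care is exactly the passage to $A^\top$ combined with the normalization to $\beta = 0$: Lemma~\ref{lem:vc_dim_of_upper_graph} is stated only for $(*,0,*)$-matrices, so one cannot quote it directly without first fixing the diagonal, and one must also be careful that the binary-complement reduction is reversible and preserves the conclusion $\alpha \ne \beta$.
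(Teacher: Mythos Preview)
Your proposal is correct and is essentially the same argument as the paper's proof: the paper likewise passes to the transpose (taking the binary complement when $\beta=1$ so that the diagonal is $0$, as required by Lemma~\ref{lem:vc_dim_of_upper_graph}) and then reruns the upper-graph/Theorem~\ref{thm:poly_ramsey} step from Lemma~\ref{lem:ab*_matrix}. Your write-up is somewhat more explicit about the reduction and the verification that the resulting submatrix has the desired form, but the strategy is identical.
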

\begin{proof}
    We proceed similarly as in the proof of Lemma~\ref{lem:ab*_matrix}, except that we construct the upper graph from the transpose of the matrix $A$, or from the binary complement of the transpose if $A$ is a $(0,1,*)$-matrix (to formally comply with Lemma~\ref{lem:vc_dim_of_upper_graph} by having $0$ on the diagonal).
    By applying Theorem~\ref{thm:poly_ramsey}, we obtain a square $(\alpha, \beta, \gamma)$-submatrix of $A$ of size at least $n^{e(4d+3)}$.
\end{proof}

It remains to prove the main theorem.

\begin{proof}[Proof of Theorem~\ref{thm:poly_ding_matrices}]
    By composing Corollary~\ref{cor:switch_matrix_for_bounded_VC} with Lemmas~\ref{lem:ab*_matrix}~and~\ref{lem:abc_matrix}, we obtain an inhomogeneous square $(\alpha, \beta, \gamma)$-submatrix with $\alpha \not= \beta$ of polynomial size with the exponent
    \[
        \frac{1}{d} \cdot e(4d+3) \cdot e(4d+3)
        ,
    \]
    where $e(\cdot)$ is the function from Theorem~\ref{thm:poly_ramsey}.
\end{proof}

\section{Remarks on tame classes}\label{sec:remarks_on_tame_classes}

In recent years, much effort has been invested into combinatorial understanding of classical model-theoretic notions such as stability and dependence with the aim to establish dividing lines between tame and wild classes of finite structures \cite{Gajarsky2023}\cite{Torunczyk2023}\cite{Dreier2024}.
These include the model-theoretic notions of (monadic and edge-)stability, (monadic) dependence and the combinatorial notion of bounded twin-width.
For their definitions please see \cite{Tent2012}\cite{Simon2015}\cite{Dreier2024}\cite{Bonnet2021}, Figure~\ref{fig:tameness_inclusions} shows their inclusions.
Considered folklore, Theorem~\ref{thm:doov} plays an important role in this area.
For example, it was one of the tools that allowed Dreier, M\"{a}hlmann, Siebertz, and Toruńczyk to give the first purely combinatorial characterization of monadic stability \cite{Dreier2023c}.
Moreover, Dreier, M\"{a}hlmann, and Toruńczyk were able to describe the forbidden subgraphs of monadically dependent graph classes, yielding the first combinatorial characterization of monadic dependence \cite{Dreier2024}.

\begin{figure}[t]
    \centering
    \includegraphics[scale=1]{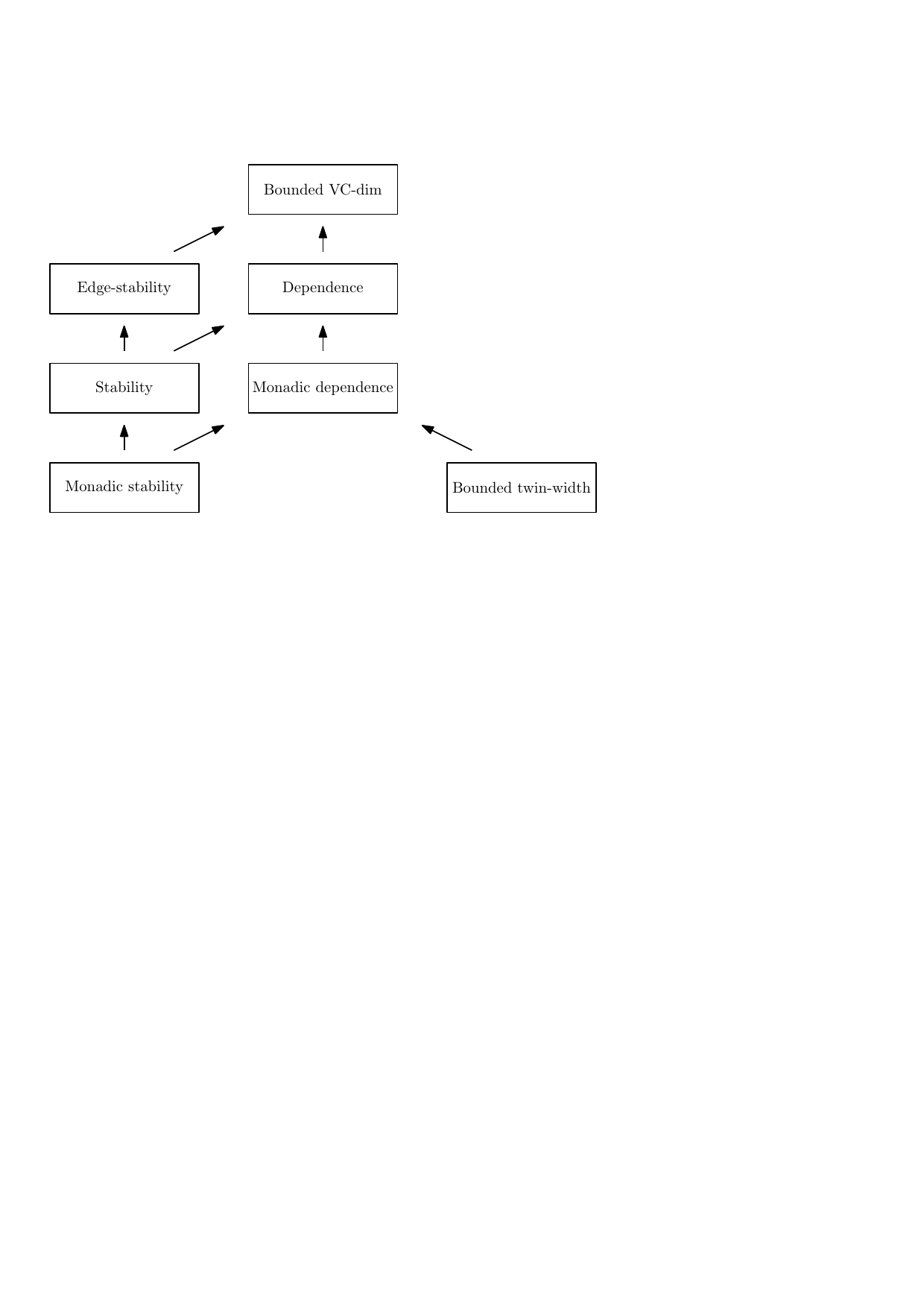}
    \caption{A diagram of various tameness notions and their inclusions.}
    \label{fig:tameness_inclusions}
\end{figure}

All the notions in Figure~\ref{fig:tameness_inclusions} imply bounded VC-dimension.
Therefore, our improvement in form of Theorem~\ref{thm:poly_ding_graphs} applies to all these classes.
However, the more restricted notions allow for further improvements of Theorem~\ref{thm:poly_ding_graphs} and better bounds.

In Corollary~\ref{cor:switch_matrix_for_bounded_VC}, we used Theorem~\ref{thm:sauer_shelah} to bound the growth function of classes of bounded VC-dimension.
While Theorem~\ref{thm:sauer_shelah} gives a polynomial bound on the growth function of the neighborhood systems, also called neighborhood complexity, it is conjectured that the monadically dependent classes admit an \emph{almost linear} bound \cite{Dreier2023a}\cite{Dreier2024}.
That is, that for graphs from the class $\mathcal{C}$, the growth function is bounded by $cn^{1+\eps}$ for every $\eps > 0$ a constant $c = c(\eps, \mathcal{C})$.
This is known to be true for monadically stable classes \cite{Dreier2023b}.
For classes of bounded twin-width, the situation is even more pleasant as the neighborhood complexity is linear as proved independently by \cite{Bonnet2022} and \cite{Przybyszewski2023}, with an essentially tight bound by \cite{Bonnet2024a}.
Using these exponents in Lemma~\ref{lem:switch_matrix}, we conclude that graphs from classes of bounded twin-width admit a switch submatrix of a linear size, while monadically stable classes admit a polynomial switch submatrix with the exponent $1 - \eps$ for every $\eps > 0$.

Moreover, if the upper graph of the given matrix belongs to a tame class, we can reach better bounds by substituting Theorem~\ref{thm:poly_ramsey} by a more specific statement.
This is the case of classes of bounded twin-width,\footnote{Similarly as in Lemma~\ref{lem:vc_dim_of_upper_graph}, large twin-width of $G$ is witnessed by a certain submatrix called mixed-minor \cite{Bonnet2021}. Then $A$ contains a mixed-minor of at least half of the size, and consequently has large twin-width as well. The contrapositive states that the upper graph of a matrix with bounded twin-width has itself bounded twin-width. However, a more direct argument with a better quantitative bound would be desirable.} which admit even the strong Erd\H{o}s-Hajnal property \cite[Theorem 20]{Bonnet2024b}.
It was implicitly proved by Malliaris and Shelah \cite[Theorem~3.5]{Malliaris2013} that stable graph classes possess the Erd\H{o}s-Hajnal property.
The bound was further improved in \cite[Corollary~4.9]{Malliaris2017} using only the milder assumption of edge-stability (that is, only the edge relation is required to be stable; not necessarily all the definable relations).
Furthermore, for the more restricted monadically stable graph classes, it was recently proved that they possess the Erd\H{o}s-Hajnal property with the exponent $1/2 - \eps$ for any $\eps > 0$ \cite[Corollary~62]{Braunfeld2025}.
Notably, this says that there is a uniform Erd\H{o}s-Hajnal exponent for all monadically stable graph classes.

Speaking about (edge-)stable classes, we should point out that we can further simplify the conclusion of Theorem~\ref{thm:poly_ding_graphs}.
Indeed, for each edge-stable graph class there is (by definition) a bound on the size of half-graphs that can appear as subgraphs, which allows us to narrow the conclusion of Theorem~\ref{thm:poly_ding_graphs} that the target subgraph is either a matching or a co-matching.

\section{Acknowledgement}\label{sec:acknowledgement}

The author would like to thank Patrice Ossona de Mendez for his many suggestions concerning Section~\ref{sec:remarks_on_tame_classes}.
Many thanks also belong to the reviewers whose helpful comments have
helped improve the presentation.

The author is partially supported by the European Research Council (ERC) under the European Union’s Horizon 2020 research and innovation programme (ERC Synergy Grant DYNASNET, grant agreement No 810115), by Project 24-12591M of the Czech Science Foundation (GAČR), and by the grant SVV–2025–260822.

\printbibliography

\end{document}